\def\C {{\mathcal C}}
\def\H {{\mathcal H}}
\def\A {{\mathbb A}}
\def\F {{\mathsf{F}}}
\def\P {{\mathsf{P}}}
\def\E {{\mathsf{E}}}
\def\LL {{\mathsf{V}}}
\def\R {\mathbb{R}}
\def\N {\mathbb{N}}
\def\D {{\mathfrak{D}}}
\def\Re {\mathfrak{Re\,}}
\def\Im {\mathfrak{Im\,}}
\def\e{{\rm e}}
\def\d{{\rm d}}
\def\ddt{\frac{\d}{\d t}}
\def\i{{\rm i}}
\def \l {\langle}
\def \r {\rangle}
\def \and {{\qquad\text{and}\qquad}}
\newtheorem{proposition}{Proposition}[section]
\newtheorem{theorem}[proposition]{Theorem}
\newtheorem{lemma}[proposition]{Lemma}
\theoremstyle{definition}
\newtheorem{remark}[proposition]{Remark}
\newtheorem*{Acknowledgments}{Acknowledgments}
\numberwithin{equation}{section}
\def \au {\rm}
\def \ti {\it}
\def \jou {\rm}
\def \bk {\it}
\def \no#1#2#3 {{\bf #1} (#3), #2.}
\def \eds#1#2#3 {#1, #2, #3.}
\title[MGT equation with memory of type II]
{On the MGT equation with memory of type II}
\author[F. Dell'Oro, I. Lasiecka and V. Pata]
{Filippo Dell'Oro, Irena Lasiecka and Vittorino Pata}
\address{Politecnico di Milano - Dipartimento di Matematica
\newline\indent
Via Bonardi 9, 20133 Milano, Italy}
\email{filippo.delloro@polimi.it {\rm (F. Dell'Oro)}}
\email{vittorino.pata@polimi.it {\rm (V. Pata)}}
\address{University of Memphis - Department of Mathematical Sciences
\newline\indent
Memphis, TN, 38152, USA}
\email{lasiecka@memphis.edu {\rm (I. Lasiecka)}}
\subjclass[2000]{35B35, 35G05, 45D05}
\keywords{Moore-Gibson-Thompson equation with memory, memory kernel, exponentially growing solutions}
\begin{document}

\begin{abstract}
We consider the Moore-Gibson-Thompson equation with memory of type II
$$
\partial_{ttt} u(t) + \alpha \partial_{tt} u(t) + \beta A \partial_t u(t) + \gamma Au(t)-\int_0^t g(t-s) A \partial_t u(s)\d s=0
$$
where $A$ is a strictly positive selfadjoint linear operator (bounded or unbounded) and
$\alpha,\beta,\gamma>0$ satisfy the relation $\gamma\leq\alpha\beta$.
First, we prove a well-posedness result without requiring any restriction on the total mass $\varrho$ of $g$.
Then we show that it is always possible
to find memory kernels $g$, complying with the usual mass restriction $\varrho<\beta$, such that the equation admits solutions
with energy growing exponentially fast. In particular, this provides the answer to a question raised in \cite{DLP}.
\end{abstract}

\maketitle


\section{Introduction}

\noindent
Let $(H,\l \cdot,\cdot \r, \|\cdot\| ) $ be a separable real Hilbert space, and let
$$A: \D(A)\subset H \to H$$
be a strictly positive selfadjoint linear operator (bounded or unbounded).
We consider the Moore-Gibson-Thompson (MGT) equation with memory of type II
\begin{equation}
\label{VOLT}
\partial_{ttt} u(t) + \alpha \partial_{tt} u(t) + \beta A \partial_t u(t) + \gamma Au(t)-\int_0^t g(t-s) A \partial_t u(s)\d s=0,
\end{equation}
where $\alpha,\beta,\gamma$ are strictly positive fixed constants subject to the structural constraint
\begin{equation}
\label{S1}
\gamma \leq \alpha\beta,
\end{equation}
and the so-called memory kernel $g:[0,\infty)\to [0,\infty)$ is an absolutely continuous 
nonincreasing function of total mass
$$\varrho=\int_0^\infty g(s)\d s>0.$$

The MGT equation without memory, i.e.\
\begin{equation}
\label{nomem}
\partial_{ttt} u + \alpha \partial_{tt} u + \beta A \partial_t u + \gamma Au=0,
\end{equation}
is a model arising in acoustics and
accounting for the second sound effects and the
associated thermal relaxations in viscous fluids \cite{PJ,MG,STO,TOM}.
The case $\gamma<\alpha\beta$ is referred to as {\it subcritical}, since in this regime
the associated solution semigroup exhibits an exponential decay in the natural weak energy space
$$\H=\D(A^\frac{1}{2})\times \D(A^\frac{1}{2})\times H.$$
On the contrary,
the case $\gamma=\alpha\beta$ is
{\it critical}, since stability (even the nonuniform one) is lost \cite{KLM,TRIG}. Finally, in the {\it supercritical} case
$\gamma>\alpha\beta$, there exist trajectories whose energy blows up exponentially \cite{DPMGT,KLM,TRIG}.
If additional molecular relaxation phenomena are taken into
account, integral terms pop up in the MGT equation, leading to \eqref{VOLT} with
a nonnull memory kernel \cite{jordan1,LW,LW2,lebon,ostrovsky}. In more generality,
the convolution term appearing in \eqref{VOLT} can be taken of the form
$\int_0^t g(t-s) A w (s)\,\d s$, where the variable $w$ is of the following three types:
$$w(s) =
\begin{cases}
u(s) \quad &\text{(type I)},\\\noalign{\vskip0.5mm}
\partial_t u(s)\quad &\text{(type II)},\\\noalign{\vskip0.5mm}
k u(s) + \partial_t u(s)\quad\text{for}\,\, k>0\quad &\text{(type III)}.
\end{cases}$$
For the memory of type I, the picture is quite understood. As shown in \cite{LW},
in the subcritical case and under proper decay
assumptions on the kernel $g$ all the solutions
converge exponentially to zero. Instead, in the critical case
the decay is only strong, with a counterexample
to exponential stability if the operator $A$ is unbounded \cite{DLP}.
An interesting question becomes what is the effect of the memory of type II.
Here, in the subcritical case and with strong restrictions
on the mass of memory kernel including $\varrho \ll \beta$, one shows the
exponential decay of the energy \cite{LW}.
In the critical case, exponential stability holds, but with a very special choice of memory of type III,
namely $w = \gamma \beta^{-1} u + \partial_t u $.

It is worth noting that in all the results mentioned above a structural restriction on $\varrho$ is required.
For the case of memory of type II, such a restriction reads
$$\varrho<\beta.$$
To better understand this issue,
an interesting comparison can be made
with the MGT equation without memory \eqref{nomem}, which is shown to be ill-posed in $\H$ if
$A$ is unbounded and $\beta=0$ (but the same is true if $\beta\leq 0$),
in the sense that the equation does not
generate a strongly continuous semigroup (see \cite{KLM}). And indeed, equation \eqref{nomem}
with $(\beta-\varrho)A \partial_t u$ in place of
the term $\beta A \partial_t u$ can be considered the limiting case of \eqref{VOLT}
when the kernel
$g$ converges to a multiple $\varrho$ of the Dirac mass at $0^+$. This would somehow indicate that
some problems might arise when $\varrho\geq\beta$. Quite unexpectedly, as it will be shown in this work,
it is instead possible to have existence and uniqueness of solutions in the natural weak energy space $\H$,
no matter how is the size of $\varrho$.
In fact, the same picture occurs for the MGT equation with memory of type I or III.

A second intriguing problem is to fully understand the effects of the memory of type II on the longtime dynamics,
\emph{within} the restriction $\varrho<\beta$ (indeed, if $\varrho\geq \beta$, blow up at infinity is the general rule).
In particular, whether this damping alone is able to stabilize the equation in the critical case.
As we shall see in this paper, the answer is negative.
Even more is true:\ a {\it subcritical} MGT equation can be exponentially destabilized by ``large"
effects of the memory of type II. Hence, a posteriori,
for such an equation we may say that no critical value changing the asymptotic dynamics exists, in the sense that
blow up of solutions appears, both in the subcritical and in the critical regimes.
In particular, this provides an answer to a question raised in \cite{DLP}. Actually, in the same paper \cite{DLP}
a heuristic explanation was given, by noting that the action
of the memory of type II can be interpreted as an addition of a ``stabilizer" and of an ``antidamper" to  the MGT equation. To wit, observe that
$$- \int_0^t   g(t-s) A \partial_t u (s) \d s =
- g(0) A u(t)  + g(t) A u(0) - \int_0^t  g'(t-s) A u(s) \d s.$$
The above formula indicates that the memory of type II provides two opposite effects.
The static damping term $ g(0) A u $  moves  the original critical value
$\gamma=\alpha\beta$ to the noncritical region $ \gamma=\alpha\beta-g(0)$. On the other hand, the
viscoelastic term $-\int_0^t g'(t-s) A u(s) \d s $ acts as an ``antidamper", due to the  negative sign of $g'$.
This renders the issue quite interesting, as it is not clear which ``damping" wins the game. Of course, the value $g(0)$ and
$g'(t)$ will play a crucial role.

\subsection*{Comparison with the previous literature}
The recent paper \cite{brasilian} is concerned with the existence, uniqueness
and stability of the MGT equation with {\it infinite memory},
i.e.\ with a more general convolution term of the form
$\int_{-\infty}^t g(t-s) A w (s) \d s$. In the case of memory of type I,
Theorem 3.7 therein proves the exponential decay of solutions
in the subcritical case.
This result has been already shown for {\it finite memory} of type I in \cite{LW}
(see also  \cite{LW2} for more general relaxation kernels leading to uniform but not exponential decays).
In the case of memory of type II, the same \cite[Theorem 3.7]{brasilian} establishes the
exponential decay of the energy in the subcritical case, but under strong ``smallness" type restrictions
imposed on the mass of kernel $\varrho$. Here, again, this is an extension to infinite memory of the results obtained in \cite{LW}.
In short, this ``smallness"  condition requires a rather fast decay of the kernel $g$ with respect
to the strictly positive value $\alpha\beta-\gamma$.
For exponentially decaying kernels of the form $g(t)=\varrho \delta e^{-\delta t }$
with $\delta>0$, this condition translates into
$ \varrho < \beta - \gamma  \alpha^{-1} $.

Regarding the negative result in the case of memory
of type II (conjectured in \cite{DLP}), the paper \cite{brasilian} evokes the lack of dissipativity
of the generator for larger values of  $\varrho$. One should note that dissipativity is a property of the
considered inner product and, alone, cannot prove the conjecture stated in \cite{DLP}, i.e.\ to disprove exponential stability.
In summary, the analysis carried out in \cite{brasilian}
is inconclusive with respect to the open
question under consideration.

Coming instead to the MGT equation with
memory of type II in the critical case, it is still unknown whether exponential stability could be achieved with suitably
calibrated relaxation kernel (fast decay and small mass).
Positive results are available in the literature (see \cite{brasilian,LW}), but only in the subcritical regime.

\subsection*{Notation}
We define the family of nested Hilbert spaces depending on a parameter $r\in \R$
$$
H^r=\D(A^\frac{r}{2}),\qquad\,\, \l u,v\r_r = \l A^{\frac{r}{2}} u , A^{\frac{r}{2}} v \r, \qquad\,\, \| u\|_r = \|A^{\frac{r}{2}} u\|.
$$
The index $r$ will be always omitted whenever zero. Along the paper,
the H\"older, Young and Poincar\'e inequalities will be tacitly used in several occasions.
The phase space of our problem is
$$
\H=H^1 \times H^1 \times H,
$$
endowed with the (Hilbert) product norm
$$
\|(u,v,w)\|_\H^2 = \|u\|_1^2 + \|v\|_1^2 + \|w\|^2.
$$

\section{Well-Posedness}

\noindent
The existence and uniqueness result for \eqref{VOLT} is ensured by the following theorems.

\begin{theorem}
\label{p1}
If the derivative $g'$ is bounded on bounded intervals, then
for every initial datum $U_0\in \H$, equation \eqref{VOLT} admits a unique weak solution
$$U=(u,\partial_t u,\partial_{tt}u)\in \C([0,T],\H)$$
on the interval $[0,T]$, for any $T>0$.
\end{theorem}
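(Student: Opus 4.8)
The plan is to construct the weak solution by a Galerkin scheme and to obtain it as the limit of finite-dimensional approximations controlled by a single a priori energy estimate. Let $\{e_k\}$ be the orthonormal basis of $H$ made of eigenvectors of $A$, let $H_n=\mathrm{span}\{e_1,\dots,e_n\}$, and let $P_n$ be the orthogonal projection onto $H_n$. Projecting \eqref{VOLT} onto $H_n$ and writing $u_n=\sum_{k\le n} c_k(t)e_k$, one obtains a linear system of scalar Volterra integro-differential equations for the coefficients $c_k$. Since $g$ is nonincreasing with $g(0)<\infty$ and $g'$ is bounded on $[0,T]$, the kernel is of convolution type with locally bounded data, so standard Volterra/ODE theory yields a unique solution $U_n=(u_n,\partial_t u_n,\partial_{tt}u_n)\in\C([0,T],\H)$ defined on the whole interval $[0,T]$; being linear, the finite-dimensional problem exhibits no blow-up.

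The heart of the matter is a bound on $\|U_n\|_\H$ uniform in $n$. I would test the projected equation with the multiplier $m_n=\partial_{tt}u_n+\tfrac{\gamma}{\beta}\,\partial_t u_n$, which is the combination tailored to the MGT structure: with $\delta=\gamma/\beta$ the terms proportional to $\|\partial_t u_n\|_1^2$ cancel, the dissipation produced by the local part equals $(\alpha-\tfrac{\gamma}{\beta})\|\partial_{tt}u_n\|^2\ge0$ precisely because of the structural constraint \eqref{S1}, and the remaining time derivative defines a functional $E_n(t)$ equivalent to $\|U_n(t)\|_\H^2$ up to lower-order terms that Gronwall will absorb. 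The only delicate contribution is the convolution term. Rather than estimating it pointwise in $t$, I would integrate the energy identity over $[0,t]$ and integrate by parts in the \emph{time} variable in the memory term; equivalently, one may invoke the algebraic identity
\[
-\int_0^t g(t-s)A\partial_s u(s)\,\d s=-g(0)Au(t)+g(t)Au(0)-\int_0^t g'(t-s)Au(s)\,\d s
\]
recalled in the Introduction. Either device transfers the operator $A$ so that the whole memory contribution reduces to $H^1$-inner products of $u_n$ and $\partial_t u_n$ weighted by $g(0)$, $g(t)$ and $g'$. At this point the hypothesis that $g'$ is bounded on bounded intervals is exactly what is needed: the antidamper and the remaining convolution contributions are dominated by a constant depending only on $g(0)$ and $\|g'\|_{L^\infty(0,T)}$ times $\int_0^t\|U_n(\tau)\|_\H^2\,\d\tau$ (the boundary terms being absorbed by a Young inequality), so that Gronwall's lemma yields $\sup_{[0,T]}\|U_n(t)\|_\H^2\le C(T)\,\|U_0\|_\H^2$ with $C(T)$ independent of $n$.

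With the uniform bound in hand, the conclusion is routine. The sequence $U_n$ is bounded in $L^\infty(0,T;\H)$, so a subsequence converges weakly-$*$; passing to the limit in the projected equations, the convolution term being linear and continuous, identifies the limit $U$ as a weak solution, and a standard argument upgrades the weak-in-time continuity to $U\in\C([0,T],\H)$. Uniqueness follows from linearity: the difference of two solutions solves \eqref{VOLT} with null data, and the very same energy estimate forces it to vanish. I expect the main obstacle to be the a priori estimate, and within it the treatment of the convolution term: one must ensure that the ``antidamper'' $-\int_0^t g'(t-s)Au(s)\,\d s$ is genuinely of lower order relative to the energy, which is guaranteed by transferring the operator $A$ via the time integration by parts together with the local boundedness of $g'$. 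Crucially, no sign or smallness condition on $\varrho$ intervenes, in agreement with the statement.
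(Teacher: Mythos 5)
Your overall route (Galerkin approximation plus an a priori estimate, in place of the paper's MacCamy trick, semigroup theory in $H^2\times H^1\times H$, and a density argument) could in principle work, but as written it has two genuine gaps, both sitting exactly where you anticipate the difficulty. The first concerns coercivity. The multiplier $2\bigl(\partial_{tt}u_n+\tfrac{\gamma}{\beta}\partial_t u_n\bigr)$ produces, after the cancellations you describe, the functional
\[
E_n=\Bigl\|\partial_{tt}u_n+\tfrac{\gamma}{\beta}\partial_t u_n\Bigr\|^2+\tfrac{\gamma}{\beta}\Bigl(\alpha-\tfrac{\gamma}{\beta}\Bigr)\|\partial_t u_n\|^2+\beta\Bigl\|\partial_t u_n+\tfrac{\gamma}{\beta}u_n\Bigr\|_1^2,
\]
which is \emph{not} equivalent to $\|U_n\|_\H^2$, not even modulo lower-order terms. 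In the critical case $\gamma=\alpha\beta$ it vanishes identically on the nonzero states $(u_0,-\alpha u_0,\alpha^2 u_0)$; in the subcritical case with $A$ unbounded it is still non-coercive (take $\partial_t u_n=-\tfrac{\gamma}{\beta}u_n$ and $\partial_{tt}u_n=-\tfrac{\gamma}{\beta}\partial_t u_n$ with $\|u_n\|_1=1$ and $\|u_n\|$ small). Hence a Gronwall bound on $E_n$ yields no bound on $\sup_{[0,T]}\|U_n\|_\H$, which is what both the limit passage and uniqueness require. The missing ingredient is precisely the paper's device: add $2m\langle u_n,\partial_t u_n\rangle_1$ to both sides of the identity, so that the functional gains the term $m\|u_n\|_1^2$ and becomes equivalent to $\|U_n\|_\H^2$ for $m$ large, the extra right-hand side term being harmless for Gronwall. (Once this is done, the $\tfrac{\gamma}{\beta}\partial_t u_n$ component of your multiplier is superfluous: for a finite-time estimate the term $2\gamma\|\partial_t u_n\|_1^2$ left by the plain multiplier $2\partial_{tt}u_n$ is itself Gronwall-able, which is why the paper needs no dissipativity at all here.)

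The second gap is that your two ``equivalent'' devices for the memory term are not equivalent, and one of them fails under the stated hypothesis. After the algebraic identity, the memory contribution tested against $\partial_{tt}u_n(t)$ contains $\int_0^t g'(t-s)\langle Au_n(s),\partial_{tt}u_n(t)\rangle\,\d s$; this pairing is not dominated by the energy, since it would require either $u_n(s)\in H^2$ or $\partial_{tt}u_n(t)\in H^1$ uniformly in $n$, neither of which is available for data merely in $\H$. Repairing it by a further integration by parts in time differentiates the kernel again and calls for $g''$, which is not assumed ($g'$ is only locally bounded). What does work---and is what you should keep---is the first device applied directly to $\int_0^t g(t-s)A\partial_t u_n(s)\,\d s$: integrate the energy identity over $[0,t]$ and move the time derivative off $\partial_{tt}u_n$, so that every memory pairing becomes $\langle\partial_t u_n(s),\partial_t u_n(\tau)\rangle_1$ weighted by $g$ or $g'$. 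This is, in substance, the integration by parts the paper performs on the term $\mathfrak{I}$, there carried out on the resolvent-transformed equation, which is exactly why only $R_\mu$ and $R_\mu'$ (hence only $g'$) ever appear. A smaller but real point: a general strictly positive selfadjoint $A$ need not possess an orthonormal basis of eigenvectors (the paper assumes the existence of even a single eigenvalue only in its final section, as an additional hypothesis), so your approximating spaces should be ranges of spectral projections of $A$ associated with bounded spectral intervals, on which $A$ is bounded and the projected Volterra system is uniquely solvable, rather than spans of eigenvectors.
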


\begin{theorem}
\label{EXUN}
Assume the mass restriction $\varrho <\beta$. Then,
for every initial datum $U_0\in \H$, equation~\eqref{VOLT} admits a unique weak solution
$$U=(u,\partial_t u,\partial_{tt}u)\in \C([0,\infty),\H)$$
whose corresponding energy
$$
\F(t)= \|u(t)\|_1^2 + \|\partial_t u(t)\|_1^2 + \|\partial_{tt} u(t)\|^2
+ \int_0^t g(t-s)\|\partial_t u(t)-\partial_t u(s)\|_1^2 \d s
$$
satisfies the energy inequality
\begin{equation}
\label{EI}
\F(t) \leq K \F(0) \e^{\omega t},
\end{equation}
for some structural constants
$K,\omega>0$ and for all $t\geq0$.
\end{theorem}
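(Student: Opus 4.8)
The plan is to obtain \eqref{EI} as an a priori bound along an equivalent, carefully reweighted energy, and then to deduce existence and uniqueness from that bound by linearity and an approximation in the kernel. Throughout write $v=\partial_t u$, $w=\partial_{tt}u$, and set $g_t=\int_0^t g(s)\,\d s$, so that $g_t\uparrow\varrho$ as $t\to\infty$. First I would run the energy identity on regular solutions (say, Galerkin approximants in the eigenbasis of $A$, where every pairing below is legitimate). Differentiating $\F$ and using \eqref{VOLT} to replace $\partial_t w$, the dissipative content of the memory appears: the derivative of the history term yields $\int_0^t g'(t-s)\|v(t)-v(s)\|_1^2\,\d s\le0$ (favourable, as $g'\le0$) together with a convolution contribution that cancels exactly the convolution coming from $\partial_t w$. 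What remains are indefinite ``Hamiltonian'' cross terms, of which exactly two are dangerous because they pair $w\in H$ against an $H^1$-level quantity and hence cannot be dominated by $\F$: the term $\l Au,w\r$ (coefficient $-2\gamma$) and the term $\l v,w\r_1$ (net coefficient $2(\beta-g_t)$, after collecting the $2\l v,w\r_1$ from $\frac{\d}{\d t}\|v\|_1^2$, the $-2\beta\l v,w\r_1$ from $\partial_t w$, and the $2g_t\l v,w\r_1$ from the history).

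The heart of the argument is to remove \emph{both} dangerous terms at once by passing to the equivalent functional
$$
\Phi(t)=a\|u\|_1^2+(\beta-g_t)\|v\|_1^2+\|w\|^2+2\gamma\l u,v\r_1+\int_0^t g(t-s)\|v(t)-v(s)\|_1^2\,\d s,
$$
with $a>0$ to be fixed. Replacing the weight $1$ of $\|v\|_1^2$ by the time-dependent weight $\beta-g_t$ makes the $\l v,w\r_1$ contributions cancel identically, at the mere price of the harmless dissipative term $-g(t)\|v\|_1^2\le0$ produced by $\frac{\d}{\d t}(\beta-g_t)$. Adding the lower-order cross term $2\gamma\l u,v\r_1$ generates upon differentiation the compensating $+2\gamma\l Au,w\r$ that kills $-2\gamma\l Au,w\r$, leaving only the controllable $2\gamma\|v\|_1^2$. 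After these cancellations one is left with
$$
\ddt\Phi(t)=2a\l u,v\r_1+\big(2\gamma-g(t)\big)\|v\|_1^2-2\alpha\|w\|^2+\int_0^t g'(t-s)\|v(t)-v(s)\|_1^2\,\d s,
$$
whose last two terms are nonpositive and whose first two are bounded by $C(\|u\|_1^2+\|v\|_1^2)$.

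At this point the mass restriction enters decisively. Since $g_t\le\varrho<\beta$, the weight satisfies $\beta-g_t\ge\beta-\varrho>0$, so $\Phi$ is coercive as soon as the $(u,v)$-block $a\|u\|_1^2+(\beta-g_t)\|v\|_1^2+2\gamma\l u,v\r_1$ is positive definite; by Cauchy--Schwarz this holds once $a(\beta-\varrho)>\gamma^2$, and I would simply fix $a=2\gamma^2/(\beta-\varrho)$. Then $c_1\F\le\Phi\le c_2\F$ for structural constants $c_1,c_2>0$, and $\frac{\d}{\d t}\Phi\le\omega\Phi$ with $\omega=C/c_1$. Gr\"onwall gives $\Phi(t)\le\Phi(0)\e^{\omega t}$, whence $\F(t)\le(c_2/c_1)\F(0)\e^{\omega t}$, i.e.\ \eqref{EI} with $K=c_2/c_1$. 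Were $\varrho\ge\beta$, the weight $\beta-g_t$ could vanish or become negative and coercivity would collapse; this is exactly why the threshold $\varrho<\beta$ is needed here.

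Finally, well-posedness. Uniqueness is immediate from linearity: the difference of two solutions with the same datum has zero initial energy, so \eqref{EI} forces it to vanish. For existence with a general nonincreasing absolutely continuous kernel (whose derivative need not be bounded), I would approximate $g$ by kernels $g_n$ with $g_n'$ bounded on bounded intervals and $\varrho_n\le\varrho<\beta$, invoke Theorem~\ref{p1} to get global solutions $U_n\in\C([0,\infty),\H)$, and use that the constants $K,\omega$ in \eqref{EI} are uniform in $n$ (depending only on $\alpha,\beta,\gamma$ and the bound $\varrho$) to extract a limit in $\C([0,T],\H)$, for every $T$, solving \eqref{VOLT}. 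The main obstacle is precisely the construction and control of $\Phi$: forcing the two $H^1$-level cross terms to cancel \emph{simultaneously} while preserving coercivity under $\varrho<\beta$. The remaining ingredients — justifying the formal identity by regularization (since $g'$ may be unbounded and $w$ lies only in $H$) and the compactness in the limit passage — are comparatively routine.
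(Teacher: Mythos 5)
Your energy argument is, up to notation, exactly the paper's: your functional $\Phi$ coincides with the functional $\E_m$ used in the paper (your $a$ is the paper's $m$, your $g_t$ is the paper's $G(t)=\int_0^t g(s)\,\d s$), the differential identity you derive is the same one obtained there by multiplying \eqref{VOLT} by $2\partial_{tt}u$ and adding $2m\l u,\partial_t u\r_1$ to both sides, and the coercivity-plus-Gronwall conclusion under $\varrho<\beta$ (with your explicit choice $a=2\gamma^2/(\beta-\varrho)$ in place of the paper's ``$m$ large enough'') is identical. So the heart of your proof matches the paper's proof.

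The weak point is the justification on regular objects and the existence argument. First, regularizing via ``Galerkin approximants in the eigenbasis of $A$'' presupposes that $A$ has an eigenbasis; the paper only assumes $A$ strictly positive selfadjoint, possibly with continuous spectrum (this is fixable using spectral projections $\chi_{[\lambda_1,n]}(A)$, but as written it is an unwarranted assumption). More seriously, your existence scheme --- approximate the kernel by $g_n$ with bounded derivative, solve via Theorem \ref{p1}, and pass to the limit --- is not ``comparatively routine'': the difference $U_n-U_m$ of two approximants does not solve a homogeneous equation, but rather \eqref{VOLT} with kernel $g_n$ and a forcing term $\int_0^t (g_n-g_m)(t-s)A\partial_t u_m(s)\,\d s$, which is only $H^{-1}$-valued since $\partial_t u_m$ lies merely in $H^1$. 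Pairing this against $\partial_{tt}(u_n-u_m)\in H$ is not controlled by the energy, so no Cauchy estimate in $\C([0,T],\H)$ is available; and uniform boundedness alone gives at best weak-$*$ limits, leaving continuity in time, the energy inequality for the limit, and uniqueness of weak solutions unsettled. The paper's route avoids all of this: Lemma \ref{l1} already provides solutions in $\hat\H=H^2\times H^1\times H$ for an \emph{arbitrary} absolutely continuous kernel (no bounded-derivative hypothesis is needed there), so one approximates the initial datum rather than the kernel. Then differences of approximating solutions solve the same homogeneous equation, the a priori estimate applies to them directly, and they form a Cauchy sequence in $\C([0,T],\H)$, which yields existence, continuity, the bound \eqref{EI}, and uniqueness in one stroke. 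Replacing your kernel-approximation step by this density-in-the-data argument closes the gap.
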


As already mentioned in the Introduction,
the first Theorem \ref{p1} above, within a very mild assumption on the derivative $g'$
of the memory kernel, provides existence and uniqueness of solutions
in the space ${\H}$ \emph{without} imposing the usual restriction $\varrho<\beta$ on the size of the mass of $g$.
In which case, the solutions will be typically unbounded in time and exhibit a ``rough" asymptotic behavior as $t \to\infty$.
To the best of our knowledge, this is the  first well-posedness result obtained for 
the MGT equation with memory without  assuming ``smallness" restrictions imposed on the  relaxation kernel.

If instead we assume $\varrho<\beta$, then the second Theorem \ref{EXUN} provides
existence and uniqueness of solutions in $\H$  which enjoy an exponential-type growth at infinity.

In order to prove the theorems, we first show a well-posedness result in the more regular space
$$\hat\H=H^2\times H^1\times H,$$
by constructing solutions to a memoryless nonhomogeneous MGT equation
exploiting the so-called MacCamy trick (see e.g.\ \cite{pandolfi}).

\begin{lemma}
\label{l1}
For every initial datum $U_0\in\hat \H$ (and every $\varrho>0$), equation \eqref{VOLT} admits a unique weak solution
$$U=(u,\partial_t u,\partial_{tt}u)\in \C([0,T],\hat\H)$$
on the interval $[0,T]$, for any $T>0$.
\end{lemma}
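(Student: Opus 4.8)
The plan is to realize \eqref{VOLT} as a \emph{memoryless} nonhomogeneous MGT equation by the MacCamy trick, and then to recover the memory through a Volterra fixed point. Since $g$ is absolutely continuous and nonincreasing, $g'\in L^1_{\rm loc}(0,\infty)$ with $g'\leq 0$, and an integration by parts turns the convolution into
\[
-\int_0^t g(t-s)A\partial_t u(s)\,\d s = -g(0)Au(t) + g(t)Au_0 - \int_0^t g'(t-s)Au(s)\,\d s.
\]
Absorbing the instantaneous term $-g(0)Au(t)$ into the elliptic part, equation \eqref{VOLT} becomes the memoryless MGT equation with modified coefficient $\tilde\gamma := \gamma - g(0)$ and forcing
\[
F(t) = -g(t)Au_0 + \int_0^t g'(t-s)Au(s)\,\d s.
\]
The decisive observation is that, because $u_0\in H^2=\D(A)$ and (a posteriori) $u(s)\in H^2$, one has $Au_0,\,Au(s)\in H$, so $F$ lives in the lowest-order component $H$. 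This reduces the spatial order of the memory so that it can be handled by Duhamel's formula, and it is precisely what forces us to work in the regular space $\hat\H$ rather than in $\H$.

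Next I would invoke the well-posedness of the memoryless nonhomogeneous MGT equation on $\hat\H$. Writing $U=(u,\partial_t u,\partial_{tt}u)$, the operator $\mathbb A(u,v,w)=(v,w,-\tilde\gamma Au-\beta Av-\alpha w)$ with domain $\D(A)\times\D(A)\times\D(A^{\frac12})$ generates a $C_0$-semigroup $S(t)$ on $\hat\H$ whenever $\beta>0$: this is classical MGT theory, and the precise value of $\tilde\gamma$ is immaterial, since the map $(u,v,w)\mapsto(0,0,(\tilde\gamma-\gamma)Au)$ is \emph{bounded} on $\hat\H$ (indeed $\|Au\|=\|u\|_2\leq\|U\|_{\hat\H}$), so that $\mathbb A$ is a bounded perturbation of the generator associated with the original coefficient $\gamma$. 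Consequently, for any $\Phi\in \C([0,T],H)$ and any datum in $\hat\H$ there is a unique mild solution in $\C([0,T],\hat\H)$, satisfying $\|U(t)\|_{\hat\H}\leq M\e^{\omega t}\|U_0\|_{\hat\H}+M\int_0^t \e^{\omega(t-\tau)}\|\Phi(\tau)\|\,\d\tau$ for structural constants $M,\omega>0$.

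The memory is then restored by a fixed point. On $\C([0,T_0],\hat\H)$ define $\Psi(U)(t)=S(t)U_0+\int_0^t S(t-\tau)(0,0,F_U(\tau))\,\d\tau$, where $F_U$ is the forcing above built from the first component $u$ of $U$; note $F_U\in\C([0,T_0],H)$, so $\Psi$ is well defined. As $F_U$ depends on $u$ only through the Volterra convolution with the $L^1$ kernel $g'$, for $U,\tilde U\in \C([0,T_0],\hat\H)$ one has
\[
\|F_U(\tau)-F_{\tilde U}(\tau)\| \leq \int_0^\tau |g'(\tau-s)|\,\|u(s)-\tilde u(s)\|_2\,\d s \leq g(0)\,\|U-\tilde U\|_{\C([0,T_0],\hat\H)},
\]
whence $\|\Psi(U)-\Psi(\tilde U)\|_{\C([0,T_0],\hat\H)}\leq M \e^{\omega T_0}g(0)\,T_0\,\|U-\tilde U\|_{\C([0,T_0],\hat\H)}$. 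Choosing $T_0$ small makes $\Psi$ a contraction, giving a unique solution on $[0,T_0]$; since the Lipschitz bound is uniform (the problem is linear and $g(0)$ fixed), steps of equal length extend the solution, together with uniqueness, to the whole of $[0,T]$. Finally, reversing the integration by parts — legitimate because $u\in\C([0,T],H^2)$ renders $Au(t)$ and $Au_0$ bona fide elements of $H$ — shows that the solution of the reduced problem is exactly the sought weak solution of \eqref{VOLT}.

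I expect the main obstacle to be conceptual rather than computational: the memory term $\int_0^t g(t-s)A\partial_t u\,\d s$ has the \emph{same} spatial order as the leading elliptic term $\beta A\partial_t u$, so it cannot be treated as a lower-order perturbation on $\H$. The entire role of the MacCamy trick is to trade the time derivative under the integral for a gain of spatial order, at the price of requiring $u_0\in H^2$; the care lies in checking that the resulting forcing is genuinely $H$-valued and that the shifted memoryless operator still generates a semigroup on the asymmetric space $\hat\H$.
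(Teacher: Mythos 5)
Your proof is correct, and it reaches the same reduced problem as the paper — a memoryless nonhomogeneous MGT equation on $\hat\H$ solved by Duhamel's formula plus a Banach contraction iterated over intervals of equal length — but via a genuinely different reduction device. The paper applies the MacCamy trick: it inverts the Volterra operator $I+\mu\ast$ (with $\mu=-g/\beta$) acting on $A^{\frac12}\partial_t u$, expresses the forcing through the resolvent kernel $R_\mu$, and after one integration by parts in time absorbs the instantaneous term $R_\mu(0)\partial_{tt}u$ into a shifted coefficient $\hat\alpha=\alpha-R_\mu(0)>\alpha>0$, so that all coefficients remain positive and the generation result of \cite{KLM} applies verbatim. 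You instead integrate by parts directly inside the memory term (this is exactly the ``stabilizer/antidamper'' identity the paper only uses heuristically in its Introduction), which shifts $\gamma$ to $\tilde\gamma=\gamma-g(0)$ — possibly negative — and produces the forcing $-g(t)Au_0+g'\ast Au$; you then correctly handle the sign issue by observing that $(u,v,w)\mapsto(0,0,g(0)Au)$ is a \emph{bounded} perturbation on $\hat\H$ (this step would fail on $\H$, but $\hat\H$ is precisely the workspace), and your Lipschitz estimate is clean because monotonicity gives $\int_0^\tau|g'|\leq g(0)$ for free. Both reductions hinge on the same structural point, which you identify explicitly: the forcing is $H$-valued only because $u,u_0\in H^2$, which is why the lemma lives in $\hat\H$ and needs no restriction on $\varrho$. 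Your route is arguably more elementary (no resolvent kernels at all); the paper's route keeps the equation within the positive-coefficient MGT class, avoiding even the small perturbation argument. One point to state a bit more carefully: for \emph{uniqueness} of the original problem you also need the forward direction of the equivalence (every $\C([0,T],\hat\H)$ weak solution of \eqref{VOLT} solves the reduced problem), but this is the same integration-by-parts identity, valid in $H^{-1}$ at this regularity level, so the argument goes through.
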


\begin{proof}
Let $T>0$ be arbitrarily fixed, and let $R_\mu(t) $ denote the resolvent operator associated with the kernel
$$\mu(s)=-\frac1\beta g(s).$$
This means that $R_\mu$ solves the equation
$$
R_\mu(t) + \int_0^t \mu(t-s) R_\mu(s)\,\d s = \mu(t),\quad \forall t\geq 0.
$$
We now rewrite \eqref{VOLT} in the form
$$
A^{\frac12}\partial_t u(t)+\int_0^t \mu(t-s) A^{\frac12}\partial_t u(s)\d s=Y(t),$$
where
$$Y(t)=-\frac1\beta\big[A^{-\frac12}\partial_{ttt} u(t) + \alpha A^{-\frac12}\partial_{tt} u(t) + \gamma A^{\frac12}u(t)\big].
$$
If we knew in advance that $Y \in \C([0,T],H)$,
then the function $X(t)=A^{\frac12}\partial_t u(t)$, being the solution to the Volterra equation on $[0,T]$
$$
X(t) + \int_0^t \mu(t-s) X(s)\,\d s = Y(t),
$$
has the explicit representation
$$
X(t) = Y(t) -\int_0^t R_\mu(t-s) Y(s)\, \d s.
$$
Applying $\beta A^\frac12$ to both sides, we conclude that the function $U=(u,\partial_t u,\partial_{tt}u)$
satisfies the equation
\begin{equation}
\label{fixed}
\partial_{ttt} u + \alpha \partial_{tt} u + \beta A \partial_t u + \gamma Au=Q_U,
\end{equation}
having set
$$Q_U(t)=\int_0^t R_\mu(t-s) [ \partial_{ttt}u(s)+\alpha\partial_{tt}u(s)+\gamma Au(s)]\, \d s.
$$
Integrating by parts the first term above, we obtain
\begin{align*}
Q_U(t)&=\int_0^t R'_\mu(t-s)\partial_{tt}u(s)\, \d s-R_\mu(t)\partial_{tt} u(0)+R_\mu(0)\partial_{tt} u(t)\\
&\quad +\int_0^t R_\mu(t-s) [\alpha\partial_{tt}u(s)+\gamma Au(s)]\, \d s.
\end{align*}
At this point, we observe that $R_\mu(0)=\mu(0)<0$. Thus, calling $\hat\alpha=\alpha-R_\mu(0)>0$ and
\begin{align}
\label{Qhat}
\hat Q_U(t)&=\int_0^t R'_\mu(t-s)\partial_{tt}u(s)\, \d s-R_\mu(t)\partial_{tt} u(0)\\
\nonumber
&\quad +\int_0^t R_\mu(t-s) [\alpha\partial_{tt}u(s)+\gamma Au(s)]\, \d s,
\end{align}
equation \eqref{fixed} reads
\begin{equation}
\label{fixedp}
\partial_{ttt} u + \hat\alpha \partial_{tt} u + \beta A \partial_t u + \gamma Au=\hat Q_U.
\end{equation}
Introducing the three-component vector
$${\mathcal Q}_U=(0,0,\hat Q_U),$$
we can write \eqref{fixedp} in the abstract form
\begin{equation}
\label{fixedab}
\ddt U=\A U+{\mathcal Q}_U,
\end{equation}
where
$$\A U= (\partial_t u,\partial_{tt} u, -\hat\alpha \partial_{tt} u - \beta A \partial_{t} u
-\gamma A u).$$
It is known from \cite{KLM} that the
MGT equation without memory generates a strongly continuous semigroup $S(t)=\e^{\A t}$ on $\hat\H$.
Hence, its nonhomogeneous version \eqref{fixedab} driven by a generic forcing term ${\mathcal Q}\in L^1(0,T;\hat \H)$
admits a unique solution
$$U=(u,\partial_t u,\partial_{tt}u)\in \C([0,T],\hat\H).$$
Accordingly, if ${\mathcal Q}_U\in L^1(0,T;\hat \H)$, from
the variation-of-constant formula we end up with
$$
U(t) = S(t) U_0 + \int_0^t S(t-s) {\mathcal Q}_U(s) \d s.
$$
In order to finish the proof, we merely apply the Banach contraction principle, first
on the space
$${\mathcal X}=\big\{U\in  \C([0,T_0],\hat \H)\,:\, U(0)=U_0\big\},$$
with $T_0$ sufficiently small,
and then reiterated (due to the linearity) to the intervals $[ nT_0, (n+1) T_0 ]$
until $T$ is reached.
\end{proof}

The next step is extending the result of Lemma \ref{l1} to the whole space $\H$.
This will be easily accomplished by standard density arguments, once a priori estimates involving initial data belonging to $\H$ are established.
Here, different arguments are needed depending whether we are in the framework of Theorem \ref{p1} or of Theorem \ref{EXUN}.

\begin{proof}[Proof of Theorem \ref{p1}]
Let $T>0$ be arbitrarily fixed.
We start from equation \eqref{fixedp} of the previous proof, that is,
$$
\partial_{ttt} u + \hat\alpha \partial_{tt} u + \beta A \partial_t u + \gamma Au= \hat Q_U,
$$
whose solution $U$ (which is in fact the solution to the original equation)
exists in $\hat\H $ for initial data $U_0 \in \hat \H$.
Then, we multiply by $2\partial_{tt} u $ in $H$, and
we add to both sides the term $2m\langle u,\partial_{t} u\rangle_1$ for $m>0$ to be fixed later. We obtain
the differential equality
\begin{align*}
\ddt \LL_m + 2\hat{\alpha}
\|\partial_{tt} u\|^2
 =2\gamma \|\partial_t u \|_1^2 +2m \l u, \partial_t u\r_1  + 2\l\hat Q_U, \partial_{tt} u\r,
 \end{align*}
where we set
$$\LL_m(t) = \|\partial_{tt}u (t)\|^2+\beta \| \partial_t u(t)\|^2_1 + 2\gamma \l u(t), \partial_t u(t)\r_1  + m \|u(t)\|^2_1.
$$
It is readily seen that, up to choosing $m>0$ sufficiently large, there exist $\kappa_2>\kappa_1>0$ such that
\begin{equation}
\label{kappa}
\kappa_1 \| U(t)\|^2_{\H} \leq  \LL_m(t) \leq \kappa_2 \|U(t)\|^2_{\H}.
\end{equation}
Accordingly,
\begin{equation}
\label{bel}
\ddt\LL_m \leq C\LL_m + 2 \l\hat Q_U, \partial_{tt} u\r,
\end{equation}
for some $C>0$.
We now claim that the inequality
\begin{equation}
\label{Qterm}
\int_0^t \l\hat Q_U(s), \partial_{tt} u(s)\r\d s  \leq \frac14 \LL_m(t) + C_T\LL_m(0)   + C_T\int_0^t \LL_m(s)\d s
\end{equation}
holds for every $t\in[0,T]$. Here and till the end of the proof,
$C_T>0$
denotes a {\it generic} constant, independent of the initial data,
but depending on $T$.
Then, integrating \eqref{bel} on $[0,t]$, we end up with
$$\LL_m(t)\leq C_T\LL_m(0)+C_T\int_0^t \LL_m(s)\d s,\quad\forall t\in[0,T],
$$
and the standard Gronwall lemma together with  \eqref{kappa} entail
$$\|U(t)\|_{\H} \leq C_T\|U(0)\|_{\H}, \quad\forall t \in [0,T].$$
We are left to prove \eqref{Qterm}. Recalling \eqref{Qhat}, we limit ourselves to
show the more difficult estimate of the higher-order term, namely,
$${\mathfrak I}:=\int_0^t\int_0^s R_\mu(s-y)\l u(y),\partial_{tt}u(s)\r_1\d y\d s.
$$
We write
\begin{align*}
\int_0^s R_\mu(s-y)\l u(y),\partial_{tt}u(s)\r_1\d y
&=\frac{\d}{\d s}\bigg[\int_0^s R_\mu(s-y)\l u(y),\partial_{t}u(s)\r_1\d y\bigg]\\
&- R_{\mu}(0)\l u(s), \partial_t u(s)\r_1
-\int_0^s R_\mu'(s-y)\l u(y),\partial_{t}u(s)\r_1\d y.
\end{align*}
Note that, within the boundedness assumption on $g'$, we have that $R_\mu'$ (as well as $R_\mu$) is bounded on $[0,T]$.
Then, integrating on $[0,t]$ the identity above, we are led to
\begin{align*}
{\mathfrak I}&=\int_0^t R_\mu(t-s)\l u(s),\partial_{t}u(t)\r_1\d s
-R_\mu(0)\int_0^t \l u(s), \partial_t u(s)\r_1\d s\\
&\quad-\int_0^t\int_0^s R_\mu'(s-y)\l u(y),\partial_{t}u(s)\r_1\d y\d s.
\end{align*}
We finally estimate the three terms in the right-hand side as follows:
\begin{align*}
&\int_0^t R_\mu(t-s)\l u(s),\partial_{t}u(t)\r_1\d s
-R_\mu(0)\int_0^t \l u(s), \partial_t u(s)\r_1\d s\\
&\leq \varepsilon\|U(t)\|_\H^2+\frac{C_T}\varepsilon
\int_0^t \|U(s)\|_\H^2\d s,
\end{align*}
for any $\varepsilon>0$ small, and
\begin{align*}
-\int_0^t\int_0^s R_\mu'(s-y)\l u(y),\partial_{t}u(s)\r_1\d y\d s&
\leq C_T \int_0^t \|U(s)\|_\H \int_0^s \|U(y)\|_\H\d y \d s\\
&\leq C_T\bigg(\int_0^t \|U(s)\|_\H \d s\bigg)^2\\
&\leq C_T\int_0^t \|U(s)\|_\H^2\d s.
\end{align*}
Therefore,
$${\mathfrak I}\leq \varepsilon\|U(t)\|_\H^2+\frac{C_T}\varepsilon
\int_0^t \|U(s)\|_\H^2\d s.
$$
The remaining terms of $\int_0^t \l\hat Q_U(s), \partial_{tt} u(s)\r\d s$, as we said, are controlled in a similar (in fact easier) way,
and at the end one has to use \eqref{kappa}. Only at that point, one fixes $\varepsilon$ in order to get the desired coefficient $1/4$ (or smaller)
in front of $\LL_m(t)$.
This finishes the proof.
\end{proof}

\begin{proof}[Proof of Thorem \ref{EXUN}]
We only need to show the energy inequality \eqref{EI}.
To this aim, similarly to the proof of Theorem \ref{p1}, we take the product in $H$ of \eqref{VOLT} and $2\partial_{tt}u$,
and we add to both sides the term $2m\langle u,\partial_{t} u\rangle_1$ for $m>0$ to be fixed later.
This yields
\begin{align*}
& \ddt \big[\|\partial_{tt}u(t)\|^2+\beta\|\partial_{t}u(t)\|_1^2+m\|u(t)\|_1^2+2\gamma\langle u(t),\partial_t u(t)\rangle_1\big]\\
&\quad-2\int_0^t g(t-s) \l \partial_t u(s) , \partial_{tt}u(t) \r_1 \,\d s\\
\noalign{\vskip1mm}\nonumber
&=2\gamma\|\partial_{t}u(t)\|^2_1-2\alpha\|\partial_{tt}u(t)\|^2+2m\langle u(t),\partial_{t} u(t)\rangle_1\\
\noalign{\vskip3mm}\nonumber
&\leq2(\gamma+m)\F(t).
\end{align*}
Next, calling
$G(t)=\int_0^t g(s) \,\d s$,
we compute the integral in the left-hand side as
\begin{align*}
&-2\int_0^t g(t-s) \l \partial_t u(s) , \partial_{tt}u(t) \r_1 \,\d s\\
&=  \ddt \Big[\int_0^t g(t-s)\|\partial_t u(t)-\partial_t u(s)\|_1^2\, \d s - G(t) \|\partial_tu(t)\|_1^2\Big]\\
&\quad+ g(t) \|\partial_tu(t)\|_1^2- \int_0^t g'(t-s)\|\partial_t u(t)-\partial_t u(s)\|_1^2\, \d s.
\end{align*}
Setting
\begin{align*}
\E_m(t)&=\|\partial_{tt}u(t)\|^2+(\beta-G(t))\|\partial_{t}u(t)\|_1^2+m\|u(t)\|_1^2+2\gamma\langle u(t),\partial_t u(t)\rangle_1\\
&\quad+\int_0^t g(t-s)\|\partial_t u(t)-\partial_t u(s)\|_1^2\, \d s,
\end{align*}
since $g$ is nonnegative and nonincreasing we
arrive at the inequality
$$\ddt \E_m\leq 2(\gamma+m)\F.$$
Recalling that $G(t)\leq \varrho<\beta$, is then clear that, up to choosing $m$ large enough,
$$\kappa_1 \F(t) \leq \E_m(t) \leq \kappa_2 \F(t),$$
for some $\kappa_2> \kappa_1>0$.
The desired conclusion follows by
an application of the Gronwall lemma.
\end{proof}

\begin{remark}
If $A$ is a bounded operator, the conclusions of Theorem \ref{EXUN} are easily attained
removing the restriction $\varrho<\beta$.
\end{remark}

\begin{remark}
As a final comment, it is interesting to observe that the trick of multiplying both sides of the equation by $\langle u,\partial_{t} u\rangle_1$,
employed in the proofs above, allows to provide a two-line proof of the well-posedness of the strongly damped wave
equation with the ``wrong" sign of $Au$, namely,
$$\partial_{tt}u+A \partial_t u-A u=0,$$
which highlights the essential parabolicity of the original equation. This is not the case if one has a lower-order dissipation.
Indeed, the equation
$$\partial_{tt}u+A^\vartheta \partial_t u-A u=0$$
is ill-posed for $\vartheta<1$, as the real part of the spectrum of the associated linear operator is not bounded above.
\end{remark}

\section{The Case of the Exponential Kernel}

\noindent
We now dwell on the particular case of the exponential kernel
$$
g(s)= \varrho \delta \e^{-\delta s},
$$
with
$$\varrho \in (0,\beta)\and \delta>0.$$
For this choice,
equation \eqref{VOLT} reads
\begin{equation}
\label{VOLTEXP}
\partial_{ttt} u(t) + \alpha \partial_{tt} u(t) + \beta A \partial_t u(t)
 + \gamma Au(t)-\varrho\delta \int_0^t \e^{-\delta (t-s)} A \partial_t u(s)\d s=0.
\end{equation}
In the same spirit of \cite{DLP}, taking the sum $\partial_t \eqref{VOLTEXP} + \delta\eqref{VOLTEXP}$
we obtain the fourth-order equation
\begin{equation}
\label{4TH}
\partial_{tttt} u + (\alpha+\delta) \partial_{ttt} u
+ \alpha\delta \partial_{tt} u + \beta A \partial_{tt} u + (\gamma + \delta\beta-\varrho\delta)A\partial_t u
 + \gamma\delta Au=0.
\end{equation}
Note that
$$
\gamma + \delta\beta-\varrho\delta>0,
$$
as $\varrho<\beta$.
Introducing the 4-component space
$$\mathcal{V}=H^1\times H^1\times H^1 \times H,$$
it is known from \cite{DPMILAN} that \eqref{4TH}
admits a unique (weak) solution
$$\hat U=(u,\partial_t u,\partial_{tt}u,\partial_{ttt}u)\in\C([0,\infty),\mathcal{V}),$$
for every initial datum $\hat U_0\in \mathcal{V}$.
Besides, the analysis in \cite{DPMILAN} provides necessary and sufficient conditions
in order for \eqref{4TH} to be (exponentially) stable, depending on two \emph{stability numbers} $\varkappa$ and $\varpi$,
which in turn depend only on the (positive) structural constants of the equation.
For this particular case, the two stability numbers read
$$\varkappa= \frac{\alpha\beta-\gamma+\varrho\delta}{\alpha+\delta}>0 \and \varpi= \frac{\alpha\beta\delta^2-\gamma\delta^2-\alpha\varrho\delta^2}{\gamma + \delta\beta - \varrho\delta}.$$
In particular, if $\varrho\in(\beta-\tfrac{\gamma}{\alpha},\beta)$ and $\delta$ is large enough, then
$$
\varpi <-\lambda_1 \varkappa
$$
where $\lambda_1>0$ is the smallest element of the spectrum $\sigma(A)$ of the operator $A$.
In this regime, the results of \cite{DPMILAN} predict the existence of solutions growing exponentially fast,
which gives a clear indication that our energy $\F$ might blow up exponentially for some initial data.
At the same time, the equivalence between \eqref{VOLTEXP} and \eqref{4TH} is, at this stage, only formal. The next proposition
establishes such an equivalence in a rigorous way.

\begin{proposition}
\label{gancio}
Let $U_0=(u_0,v_0,w_0)\in H^1\times H^1\times H^1$ be an arbitrarily fixed vector satisfying
the further regularity assumption
$$\beta v_0 + \gamma u_0 \in H^2.$$
Then
the projection $U=(u,\partial_t u,\partial_{tt}u)$ onto the first three components of the solution
$\hat U=(u,\partial_t u,\partial_{tt}u,\partial_{ttt}u)$ to \eqref{4TH} with initial datum
$$(u_0,v_0,w_0,-\alpha w_0  - A(\beta v_0 + \gamma u_0))\in\mathcal{V}$$
is the unique solution to
\eqref{VOLTEXP} with initial datum $U_0$.
\end{proposition}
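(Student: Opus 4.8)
The plan is to run the formal relation $\eqref{4TH}=\partial_t\eqref{VOLTEXP}+\delta\eqref{VOLTEXP}$ backwards. Since the first-order operator $\partial_t+\delta$ is inverted by convolution against the integrating factor $\e^{-\delta t}$---which is, up to the constant $\varrho\delta$, exactly the exponential kernel---I expect to recover \eqref{VOLTEXP} from \eqref{4TH} by a single Duhamel step. Accordingly, let $\hat U=(u,\partial_t u,\partial_{tt}u,\partial_{ttt}u)\in\C([0,\infty),\mathcal V)$ be the unique solution to \eqref{4TH} with the prescribed initial datum, whose existence is granted by \cite{DPMILAN}, and let $U=(u,\partial_t u,\partial_{tt}u)$ be its projection onto the first three components. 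Since $u,\partial_t u,\partial_{tt}u\in\C([0,\infty),H^1)$, we have $U\in\C([0,\infty),\H)$. I will show that $U$ solves \eqref{VOLTEXP} with datum $U_0$; its identification with \emph{the} solution is then immediate from the uniqueness part of Theorem \ref{EXUN}, which applies because $U_0\in\H$ and $\varrho<\beta$.

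The heart of the argument is an ordinary differential equation for the memoryless MGT residual
$$
z(t)=\partial_{ttt}u(t)+\alpha\partial_{tt}u(t)+\beta A\partial_t u(t)+\gamma Au(t).
$$
Differentiating $z$ and inserting \eqref{4TH} gives
\begin{align*}
\partial_t z+\delta z
&=\partial_{tttt}u+(\alpha+\delta)\partial_{ttt}u+\alpha\delta\partial_{tt}u\\
&\quad+\beta A\partial_{tt}u+(\gamma+\delta\beta)A\partial_t u+\gamma\delta Au
=\varrho\delta A\partial_t u,
\end{align*}
the last equality following from \eqref{4TH}. Moreover, because the fourth component of the datum has been chosen equal to $\partial_{ttt}u(0)=-\alpha w_0-A(\beta v_0+\gamma u_0)$, at $t=0$ the convolution is absent and
$$
z(0)=\partial_{ttt}u(0)+\alpha w_0+\beta Av_0+\gamma Au_0=0.
$$
Solving the linear equation $\partial_t z+\delta z=\varrho\delta A\partial_t u$ with null initial value through the integrating factor $\e^{\delta t}$ produces
$$
z(t)=\varrho\delta\int_0^t \e^{-\delta(t-s)}A\partial_t u(s)\,\d s,
$$
which is precisely \eqref{VOLTEXP}.

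To turn this into a rigorous argument I would track the regularity of each term. From $\hat U\in\C([0,\infty),\mathcal V)$ one reads off $u,\partial_t u,\partial_{tt}u\in\C([0,\infty),H^1)$ and $\partial_{ttt}u\in\C([0,\infty),H)$, whence $\beta A\partial_t u,\gamma Au\in\C([0,\infty),H^{-1})$ and therefore $z\in\C([0,\infty),H^{-1})$. The role of \eqref{4TH} is to supply the weak derivative $\partial_{tttt}u\in\C([0,\infty),H^{-1})$, so that $z\in\C^1([0,\infty),H^{-1})$ and the displayed ODE is a genuine $H^{-1}$-valued identity; the Duhamel representation then holds in $H^{-1}$, which is the space in which \eqref{VOLTEXP} is naturally posed. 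The hypothesis $\beta v_0+\gamma u_0\in H^2$ enters twice: it places the fourth datum $-\alpha w_0-A(\beta v_0+\gamma u_0)$ in $H$, so that $\hat U_0\in\mathcal V$, and it is exactly what renders $z(0)$ meaningful and equal to zero.

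I expect the main obstacle to be precisely the justification of this last point: one must check that the weak solution of \cite{DPMILAN} satisfies \eqref{4TH} as an identity in $\C([0,\infty),H^{-1})$---equivalently, that $\partial_{ttt}u$ is absolutely continuous into $H^{-1}$ with derivative prescribed by \eqref{4TH}---so that the integrating-factor computation is legitimate rather than merely formal. Should this pointwise-in-time reading be unavailable, the fallback is a density argument: prove the identity first for data lying in a regular subspace on which $u$ is classical and every manipulation is valid, then pass to the limit using the continuous dependence on the initial data for both \eqref{4TH} and \eqref{VOLTEXP}. In that route the difficulty migrates to selecting approximating data that respect the constraint $\beta v_0+\gamma u_0\in H^2$ while converging in the topologies that control the fourth component.
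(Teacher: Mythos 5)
Your proof is correct and takes essentially the same route as the paper: the paper's proof introduces exactly your auxiliary variable (there called $\phi$), derives the same identity $\partial_t \phi + \delta \phi - \varrho\delta A \partial_t u = 0$ from \eqref{4TH}, observes $\phi(0)=0$ thanks to the chosen fourth component of the datum, and integrates via the factor $\e^{\delta t}$ to recover \eqref{VOLTEXP}. Your additional tracking of the $H^{-1}$-valued regularity and the uniqueness via Theorem \ref{EXUN} only makes explicit what the paper leaves implicit.
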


\begin{proof}
We introduce the auxiliary variable
$$
\phi(t)= \partial_{ttt} u(t) + \alpha \partial_{tt} u(t)
+ \beta A \partial_{t} u(t) + \gamma A u(t).
$$
Since $u$ solves \eqref{4TH}, the function $\phi$ fulfils the identity
$$
\partial_t \phi + \delta \phi - \varrho \delta A \partial_t u =0.
$$
A multiplication by $\e^{\delta t}$ yields
$$
\ddt [\e^{\delta t} \phi(t)] - \varrho \delta \e^{\delta t} A \partial_t u(t)=0.
$$
Noting that $\phi(0)=0$, an integration on $[0,t]$ leads at once to \eqref{VOLTEXP}.
\end{proof}

Still, this is not enough to conclude that $\F$ can grow exponentially fast, since one has to verify that
this occurs for a particular trajectory of \eqref{4TH}, with initial conditions complying with the assumptions above.

\section{Exponentially Growing Solutions}

\noindent
In this section, we state and prove the second main result of the paper. Namely, we produce an example of
memory kernel $g$ for which equation \eqref{VOLT} admits solutions with energy growing exponentially fast. To this end,
we consider the exponential kernel $g(s)= \varrho \delta \e^{-\delta s}$ of the previous section. For simplicity,
we also assume that the spectrum of the operator $A$ contains at least one eigenvalue $\lambda>0$, which is
always the case in concrete situations.

\begin{theorem}
\label{EXPGROW}
Let $\varrho\in(\beta-\tfrac{\gamma}{\alpha},\beta)$ be arbitrarily fixed. Then, for every $\delta>0$ sufficiently large, there exist
$\varepsilon>0$, an initial datum $U_0\in \H$ and a sequence $t_n\to \infty$
such that the energy $\F(t)$ associated to the solution
to \eqref{VOLTEXP} originating from $U_0$ satisfies the estimate
$$
\F(t_n) \geq \lambda \e^{\varepsilon t_n}, \quad \forall n \in \N.
$$
\end{theorem}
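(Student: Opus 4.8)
The plan is to reduce everything to the one-dimensional eigenspace generated by $\lambda$, where \eqref{VOLTEXP} becomes a scalar constant-coefficient problem whose unstable modes can be exhibited explicitly. Let $e$ be a unit eigenvector, $Ae=\lambda e$, and seek $u(t)=w(t)e$ with $w$ scalar. By Proposition \ref{gancio}, constructing such a $w$ is equivalent to constructing a scalar solution of \eqref{4TH} specialized to the eigenvalue $\lambda$, that is $p(\d/\d t)w=0$ with
$$
p(z)=z^4+(\alpha+\delta)z^3+(\alpha\delta+\beta\lambda)z^2+(\gamma+\delta\beta-\varrho\delta)\lambda z+\gamma\delta\lambda,
$$
subject to the compatibility constraint $\phi(0)=0$ forced by the prescribed fourth datum, where $\phi=q(\d/\d t)w$ with $q(z)=z^3+\alpha z^2+\beta\lambda z+\gamma\lambda$. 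This constraint is precisely the gap noted after Proposition \ref{gancio}: I must show that a growing mode survives inside the codimension-one subspace $\{\phi(0)=0\}$.

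Next I would locate a root of $p$ with positive real part. Since $p(z)=(z+\delta)q(z)-\varrho\delta\lambda z$, dividing by $\delta$ shows that as $\delta\to\infty$ three roots of $p$ converge to the roots of the limiting cubic
$$
r(z)=z^3+\alpha z^2+(\beta-\varrho)\lambda z+\gamma\lambda,
$$
the fourth escaping to $-\infty$. All coefficients of $r$ are positive (here $\beta-\varrho>0$), so the Routh--Hurwitz criterion for $r$ reduces to $\alpha(\beta-\varrho)>\gamma$; as $\varrho>\beta-\tfrac{\gamma}{\alpha}$ this fails, and since $r>0$ on $[0,\infty)$ the instability can only come from a complex conjugate pair with strictly positive real part. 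A Rouch\'e/continuity argument then gives, for all $\delta$ large, a root $z_0=\varepsilon_0+\i\omega_0$ of $p$ with $\varepsilon_0>0$ and $\omega_0\neq0$ together with its conjugate; this is the quantitative counterpart of the instability read off from the stability number $\varpi$ in \cite{DPMILAN}.

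The crux is the third step, realizing $z_0$ within $\{\phi(0)=0\}$. Writing the (generically simple) roots of $p$ as $z_0,\bar z_0,z_2,z_3$, a real solution takes the form $w(t)=2\Re(c_0\e^{z_0 t})+c_2\e^{z_2 t}+c_3\e^{z_3 t}$ with $c_2,c_3\in\R$, and the constraint becomes the single real equation $2\Re\big(c_0 q(z_0)\big)+c_2 q(z_2)+c_3 q(z_3)=0$. The identity $q-r=\varrho\lambda z$ yields $q(z_2)\approx\varrho\lambda z_2\neq0$ and $q(z_3)\approx-\delta^3\neq0$, so I may fix $c_0=1$ and solve for $c_2,c_3$, producing a solution whose growing component does not vanish. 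Reading $u_0,v_0,w_0$ off from $w(0),w'(0),w''(0)$, the datum $U_0=(u_0 e,v_0 e,w_0 e)$ lies in $\H$ and, $e$ being an eigenvector, satisfies $\beta v_0+\gamma u_0\in H^2$; thus Proposition \ref{gancio} applies and the unique solution of \eqref{VOLTEXP} from $U_0$ (granted by Theorem \ref{EXUN}) is exactly $w(t)e$.

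Finally I would extract the lower bound. Since $\F(t)\geq\|u(t)\|_1^2=\lambda|w(t)|^2$ and $w(t)=2\e^{\varepsilon_0 t}\cos(\omega_0 t)+o(\e^{\varepsilon_0 t})$, choosing $t_n=2\pi n/\omega_0\to\infty$ makes $\cos(\omega_0 t_n)=1$, so $|w(t_n)|\gtrsim\e^{\varepsilon_0 t_n}$ and hence $\F(t_n)\geq\lambda\,\e^{\varepsilon t_n}$ for any fixed $\varepsilon\in(0,2\varepsilon_0)$ and all large $n$. I expect the main obstacle to be exactly the third step: the formal equivalence between \eqref{VOLTEXP} and \eqref{4TH} does not by itself guarantee that the unstable mode is compatible with $\phi(0)=0$, and it is the nondegeneracy $q(z_2),q(z_3)\neq0$ that prevents an accidental cancellation of the growing part.
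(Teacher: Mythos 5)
Your proposal is correct, and while it shares the paper's overall architecture (reduction to a single eigenspace, a root of the quartic with positive real part, and the compatibility condition of Proposition \ref{gancio}), both key technical steps are carried out by genuinely different means. To produce an unstable root of $\P$, the paper never passes to the limit $\delta\to\infty$ at the level of roots: it parametrizes the curve $x\mapsto x+\i q(x)$, with $q$ as in \eqref{qqq}, along which $\Im[\P]$ vanishes identically, and then runs the intermediate value theorem on $f(x)=\Re[\P(x+\i q(x))]$, the hypothesis $\varrho>\beta-\tfrac{\gamma}{\alpha}$ entering only through $f(0)>0$ for $\delta$ large. Your route --- the factorization $\P(z)=(z+\delta)(z^3+\alpha z^2+\beta\lambda z+\gamma\lambda)-\varrho\delta\lambda z$, the limiting cubic $r(z)=z^3+\alpha z^2+(\beta-\varrho)\lambda z+\gamma\lambda$, Routh--Hurwitz, and Rouch\'e --- is more conceptual: it exhibits the instability as precisely the supercriticality $\alpha(\beta-\varrho)<\gamma$ of the formal Dirac-mass limit discussed in the paper's introduction, a structure that the paper's direct computation conceals; the price is the perturbative bookkeeping you only sketch (simplicity and reality of the three finite roots, uniform convergence on compacts), all of which does hold for $\delta$ large, so this is a matter of detail rather than a gap. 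For the constraint $\phi(0)=0$, the paper stays inside the two-dimensional real span of the growing pair $p\pm\i q(p)$ and tunes the single ratio $r$, checking by an explicit sign computation that the relevant denominator is nonzero; you instead freeze the coefficient of the growing mode and dump the constraint onto the two decaying modes, which requires the cubic $z^3+\alpha z^2+\beta\lambda z+\gamma\lambda$ to be nonzero at $z_2,z_3$ --- your asymptotics give this, and in fact it holds exactly, since a common root of $\P$ and that cubic would force $\varrho\delta\lambda z=0$, impossible because $\P(0)=\lambda\gamma\delta\neq0$. What each approach buys: the paper's solution is fully explicit, so $\F(t_n)\geq\|u(t_n)\|_1^2=\lambda\e^{2pt_n}$ holds exactly, for every $n$, with $\varepsilon=2p$; yours is less computational and more robust, but the decaying modes pollute the solution, so you obtain the bound only for $n$ large and $\varepsilon<2\varepsilon_0$ (harmless, since the sequence $t_n$ may be re-indexed).
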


\begin{remark}
According to \cite{LW}, for this particular kernel exponential stability occurs in the subcritical case, within the following assumption:\
there exist $k\in(\frac\gamma\beta,\alpha)$ and $\theta>\frac{k}\delta$ such that
$$
\varrho \leq \Big( \beta -\frac\gamma{k}\Big)\min\Big\{1, \frac{2}{ k (2+\theta) }\Big\}.
$$
The reader will have no difficulties to check that the condition above implies that  $\varrho < \beta -\frac{\gamma}{\alpha}$, which contradicts
$\varrho\in(\beta-\tfrac{\gamma}{\alpha},\beta)$ assumed in Theorem \ref{EXPGROW}.
\end{remark}

In order to prove the theorem,
we introduce the fourth-order polynomial in the complex variable $\xi$
\begin{equation}
\label{pol}
\P(\xi)= \xi^4 + (\alpha+\delta)\xi^3 +(\alpha\delta + \beta\lambda) \xi^2 + \lambda(\gamma + \delta\beta-\varrho\delta) \xi
+\lambda \gamma\delta.
\end{equation}
Moreover, for all $x\geq0$, we set
\begin{equation}
\label{qqq}
q(x) = \sqrt{\frac{4 x^3 +3(\alpha+\delta)x^2 + 2(\alpha\delta + \beta\lambda)x
+ \lambda(\gamma + \delta\beta-\varrho\delta)}{\alpha+\delta + 4x}}>0.
\end{equation}
The next algebraic result will be crucial for our purposes.

\begin{lemma}
\label{keylemma}
Let $\varrho\in(\beta-\tfrac{\gamma}{\alpha},\beta)$ be arbitrarily fixed. Then, for every $\delta>0$ sufficiently large,
there exists $p>0$ such that the complex number
$$
\hat \xi = p + \i q(p)
$$
solves the equation $\P(\hat \xi)=0$.
\end{lemma}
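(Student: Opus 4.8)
The plan is to reduce the complex equation $\P(\hat\xi)=0$ to a single real equation in one variable and then solve that by the intermediate value theorem. Writing $\xi=x+\i y$ and separating real and imaginary parts, a direct expansion gives
$$\Im\P(x+\i y)=y\big[\big(4x^3+3(\alpha+\delta)x^2+2(\alpha\delta+\beta\lambda)x+\lambda(\gamma+\delta\beta-\varrho\delta)\big)-y^2\big(4x+\alpha+\delta\big)\big].$$
The point of the definition \eqref{qqq} is precisely that $q(x)^2$ equals the ratio of the two bracketed polynomials; since both the numerator and the denominator are strictly positive for every $x\geq0$ (all their coefficients being positive), $q$ is a well-defined, strictly positive, continuous function on $[0,\infty)$, and by construction $\Im\P(x+\i q(x))=0$ identically. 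Hence $\hat\xi=p+\i q(p)$ solves $\P(\hat\xi)=0$ if and only if the real function $\Phi(x):=\Re\P(x+\i q(x))$ vanishes at $x=p$, and it remains to exhibit a zero $p>0$ of $\Phi$.

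First I would compute the two ``endpoint'' values of $\Phi$. Write $a_3=\alpha+\delta$, $a_2=\alpha\delta+\beta\lambda$, $a_1=\lambda(\gamma+\delta\beta-\varrho\delta)$ and $a_0=\lambda\gamma\delta$ for the coefficients of $\P$. At $x=0$ one has $q(0)^2=a_1/a_3$, and a short calculation gives
$$a_3^2\,\Phi(0)=a_1^2-a_1a_2a_3+a_0a_3^2=-\big(a_1a_2a_3-a_1^2-a_0a_3^2\big),$$
where the last bracket is the Routh--Hurwitz quantity governing the stability of $\P$. Expanding it in powers of $\delta$, its leading term is $\lambda\big[\alpha(\beta-\varrho)-\gamma\big]\delta^3$, which is strictly negative precisely because $\varrho>\beta-\tfrac{\gamma}{\alpha}$; hence $\Phi(0)>0$ for every $\delta$ large enough. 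On the other hand, a polynomial division shows $q(x)^2=x^2+\tfrac{a_3}{2}x+O(1)$ as $x\to\infty$, and inserting this into $\Phi(x)=\Re\P(x+\i q(x))$ the three quartic contributions ($x^4$, $-6x^2q^2$ and $q^4$) combine into a leading term $-4x^4$, so that $\Phi(x)\to-\infty$.

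Since $\Phi$ is continuous on $[0,\infty)$ with $\Phi(0)>0$ and $\Phi(x)\to-\infty$, the intermediate value theorem yields some $p\in(0,\infty)$ with $\Phi(p)=0$. As $q(p)>0$, the number $\hat\xi=p+\i q(p)$ is then a genuine nonreal root of $\P$ with strictly positive real part $p$, which is exactly the assertion. The crux of the argument is the sign analysis at $x=0$: everything reduces to showing that the Routh--Hurwitz quantity changes sign, and it is here that the hypotheses $\varrho>\beta-\tfrac{\gamma}{\alpha}$ and $\delta$ large enter in an essential way. By contrast, the behavior at infinity is a parameter-independent leading-order computation, and the vanishing of the imaginary part is built into the very definition of $q$.
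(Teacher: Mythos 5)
Your proposal is correct and follows essentially the same route as the paper: the function $q$ is designed so that $\Im\P(x+\i q(x))\equiv 0$, and the zero $p>0$ of the real part $\Re\P(x+\i q(x))$ is produced by the intermediate value theorem, using that this quantity is positive at $x=0$ for $\delta$ large (via $\varrho>\beta-\tfrac{\gamma}{\alpha}$) and tends to $-\infty$ as $x\to\infty$. Your Routh--Hurwitz reformulation of the sign at $x=0$ and the explicit expansion $q(x)^2=x^2+\tfrac{\alpha+\delta}{2}x+O(1)$ merely spell out the ``direct computations'' the paper leaves implicit.
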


\begin{proof}
For all $x\geq0$, by direct calculations we find the equalities
$$
\Im [\P(x+ \i q(x))] =0 \and
\Re[\P(x+ \i q(x))] = {f}(x),
$$
where
\begin{align*}
{f}(x)& =  x^4 + (\alpha + \delta)x^3 + (\alpha\delta + \beta\lambda)x^2 + (\gamma + \delta\beta - \varrho\delta)\lambda x
+\gamma\delta\lambda+q(x)^4\\\noalign{\vskip1mm}
&\quad - (6 x^2 + 3(\alpha+\delta)x + \alpha\delta + \beta\lambda)q(x)^2.
\end{align*}
By means of direct computations, with the aid of \eqref{qqq} and the assumption $\varrho>\beta-\tfrac{\gamma}{\alpha}$,
it is readily seen that
$$\lim_{x\to+\infty }f(x)=-\infty$$
and
$$
{f}(0)= \gamma\delta\lambda+q(0)^4 -(\alpha\delta + \beta\lambda)q(0)^2 \sim \delta \lambda (\gamma-\alpha\beta + \alpha\varrho) >0,\quad \text{as } \delta\to+\infty.
$$
As a consequence,
being $f$ continuous on $[0,\infty)$, once $\delta>0$ has been fixed sufficiently large there exists
$p>0$ such that
$$
0=f(p)=\Re[\P(p + \i q(p))].$$
The proof is finished.
\end{proof}

\begin{proof}[Proof of Theorem \ref{EXPGROW}]
Denoting by $w \in H$ the normalized eigenvector of $A$ corresponding to $\lambda$,
we consider the function
$$
u(t)= \e^{pt}\, [ r \sin (q t) + \cos (q t) ]\,w.
$$
Here, $p>0$ is given by Lemma \ref{keylemma}, $q=q(p)>0$ is given by \eqref{qqq} and
$$
r=r(p)= \frac{p^3 -3pq^2+\alpha(p^2-q^2) +\beta\lambda p + \gamma\lambda}{q^3 - 3p^2 q -2\alpha p q - \beta\lambda q}.
$$
Note that $r$ is well defined, since \eqref{S1} and \eqref{qqq} ensure that
$$
q^3 - 3p^2 q -2\alpha p q - \beta\lambda q
= \frac{-q(8p^3 + 8\alpha p^2 + 2\alpha^2p + 2\beta\lambda p + \lambda \varrho\delta + \lambda(\alpha\beta-\gamma))}{\alpha+\delta+4p} <0.
$$
The function $u$ defined above solves the fourth-order equation \eqref{4TH}.
Indeed, calling for simplicity
$$\psi(t)=\e^{pt}\, [ r \sin (q t) + \cos (q t) ]$$
and recalling that due to Lemma \ref{keylemma} the complex number $p+\i q$ is a root of the polynomial $\P$ defined in~\eqref{pol},
we have
\begin{align*}
&\partial_{tttt} u + (\alpha+\delta) \partial_{ttt} u
+ \alpha\delta \partial_{tt} u + \beta A \partial_{tt} u + (\gamma + \delta\beta-\varrho\delta)A\partial_t u
 + \gamma\delta Au\\
&\quad = \Big[{\frac{\d^4 \psi}{\d t^4}} + (\alpha+\delta){\frac{\d^3 \psi}{\d t^3}}
+ (\alpha\delta + \beta\lambda){\frac{\d^2 \psi}{\d t^2}}
+ \lambda(\gamma + \delta\beta-\varrho\delta){\frac{\d \psi}{\d t}} +\lambda\gamma\delta\psi \Big]w\\\noalign{\vskip1.5mm}
&\quad = 0.
\end{align*}
Moreover, being
\begin{align*}
u(0) &= w,\\
\partial_t u(0) &= (p+rq)w,\\
\partial_{tt} u(0) &= (p^2+2rpq-q^2) w,\\
\partial_{ttt} u(0) &= (p^3 + 3 r p^2 q - 3 p q^2 -rq^3) w,
\end{align*}
thanks to the choice of $r$ it is true that
\begin{align*}
\partial_{ttt} u(0) &= -\alpha(p^2+2rpq-q^2)w - \beta \lambda(p+rq) w - \gamma \lambda w\\
& = -\alpha \partial_{tt} u(0)  - \beta A \partial_t u(0) -\gamma Au(0).
\end{align*}
Invoking Lemma~\ref{gancio}, the function $u$ turns out to be
the unique solution to
\eqref{VOLTEXP} corresponding to the initial datum
$$z_0 = (w,(p+rq) w,(p^2+2rpq-q^2) w).$$
Finally, setting
$$
t_n= \frac{2n\pi}{q}\to+\infty \and \varepsilon = 2 p>0,
$$
we conclude that
$$
\F(t_n) \geq \|u(t_n)\|_1^2 = \lambda \e^{\varepsilon t_n}.
$$
The proof of Theorem \ref{EXPGROW} is finished.
\end{proof}

\begin{Acknowledgments}
The authors would like to thank Monica Conti for fruitful discussion.
\end{Acknowledgments}



\begin{thebibliography}{99}

\bibitem{brasilian}
{\au M.O. Alves, A.H. Caixeta, M.A. Jorge Silva and J.H. Rodrigues},
{\ti Moore-Gibson-Thompson equation with memory in a history framework:\ a semigroup approach},
{\jou Z.\ Angew.\ Math.\ Phys.}
\no{69}{Art.\ 106, 19 pp}{2018}

\bibitem{DLP}
{\au F. Dell'Oro, I. Lasiecka and V. Pata},
{\ti The Moore-Gibson-Thompson equation with memory in the critical case},
{\jou J. Differential Equations}
\no{261}{4188--4222}{2016}

\bibitem{DPMGT}
{\au F. Dell'Oro and V. Pata},
{\ti On the Moore-Gibson-Thompson equation and its relation to linear viscoelasticity.},
{\jou  Appl.\ Math.\ Optim.}
\no{76}{641--655}{2017}

\bibitem{DPMILAN}
{\au F. Dell'Oro and V. Pata},
{\ti On a fourth-order equation of Moore-Gibson-Thompson type},
{\jou Milan J.\ Math.}
\no{85}{215--234}{2017}

\bibitem{PJ}
{\au P. Jordan},
{\ti Second-sound phenomena in inviscid, thermally relaxing gases},
{\jou Discrete Contin.\ Dyn.\ Syst.\ Ser.\ B}
\no{19}{2189--2205}{2014}

\bibitem{jordan1}
{\au P. Jordan},
{\ti Private communication}.

\bibitem{KLM}
{\au B. Kaltenbacher, I. Lasiecka and R. Marchand},
{\ti Wellposedness and exponential decay rates for the Moore-Gibson-Thompson equation arising in high intensity ultrasound},
{\jou Control Cybernet.}
\no{40}{971--988}{2011}

\bibitem{LW}
{\au I. Lasiecka and X. Wang},
{\ti Moore-Gibson-Thompson equation with memory, part I:\ Exponential decay of energy},
{\jou Z.\ Angew.\ Math.\ Phys.}
\no{67}{n.17}{2016}

\bibitem{LW2}
{\au I. Lasiecka and X. Wang},
{\ti Moore-Gibson-Thompson equation with memory, part II:\ General decay of energy},
{\jou J.\ Differential Equations}
\no{259}{7610--7635}{2015}
\bibitem{Las}
{\au I. Lasiecka},
{\ti Global solvability of Moore-Gibson-Thompson equation with memory arising in nonlinear acoustics},
{\jou J. Evolution Equations}
\no{17}{411-441}{2017}

\bibitem{lebon}
{\au G. Lebon and A. Cloot},
{\ti Propagation of ultrasonic sound waves in dissipative dilute gases
and extended irreversible thermodynamics},
{\jou Wave Motion}
\no{11}{23--32}{1989}

\bibitem{TRIG}
{\au R. Marchand, T. McDevitt and R. Triggiani},
{\ti An abstract semigroup approach to the
third-order Moore-Gibson-Thompson partial
differential equation arising in high-intensity
ultrasound:\ structural decomposition,
spectral analysis, exponential stability},
{\jou Math.\ Methods Appl.\ Sci.}
\no{35}{1896--1929}{2012}

\bibitem{MG}
{\au F.K. Moore and W.E. Gibson},
{\ti Propagation of weak disturbances in a gas subject to relaxation effects},
{\jou J.\ Aero/Space Sci.}
\no{27}{117--127}{1960}

\bibitem{ostrovsky}
{\au K. Naugolnykh and L. Ostrovsky},
{\bk Nonlinear wave processes in acoustics},
\eds{Cambridge University Press}{Cambridge}{1998}

\bibitem{pandolfi}
{\au L. Pandolfi}
{\ti Controllability of isotropic viscoelastic bodies of Maxwell-Boltzmann type},
{\jou ESAIM Control Optim.\ Calc.\ Var.}
\no{23}{1649--1666}{2017}

\bibitem{STO}
{\au Professor Stokes},
{\ti An examination of the possible effect of the radiation of heat on the propagation of sound},
{\jou Philos.\ Mag. Series 4}
\no{1}{305--317}{1851}

\bibitem{TOM}
{\au P.A. Thompson},
{\bk Compressible-fluid dynamics},
\eds{McGraw-Hill}{New York}{1972}

\end{thebibliography}
\end{document}